\documentclass[12pt]{article}

\usepackage[margin=1in]{geometry}

\usepackage{amsmath}
\usepackage{amssymb}
\usepackage{amsthm}
\usepackage{mathtools}
\usepackage{setspace}
\usepackage[authoryear,round]{natbib}

\usepackage{bbm}
\usepackage{booktabs}
\usepackage{graphicx}

\usepackage[hidelinks]{hyperref}

\graphicspath{{img/}}

\newtheorem{theorem}{Theorem}
\newtheorem{proposition}{Proposition}

\newtheorem{lemma}{Lemma}
\newtheorem{corollary}{Corollary}
\newtheorem{assumption}{Assumption}

\theoremstyle{definition}
\newtheorem{definition}{Definition}

\DeclareMathOperator{\rank}{rank}
\DeclareMathOperator{\tr}{tr}
\DeclareMathOperator{\rad}{rad}
\DeclareMathOperator{\supp}{supp}

\begin{document}
	\doublespacing
	
	\title{Drift Estimation under Low-Rank-plus-Sparse Structure for High-Dimensional L\'evy-Driven Ornstein--Uhlenbeck Processes}
	
	\author{Marina Palaisti\\Huron University College, Western University\\1349 Western Road\\London, Ontario N6G 1H3, Canada\\\texttt{mpalaist@uwo.ca}}
	
	\date{}
	
	\maketitle
	\begin{center}
		\textbf{Running title:} Low-Rank-plus-Sparse L\'evy--OU Drift
	\end{center}
	
	\begin{abstract}
		We study drift estimation for discretely observed high-dimensional L\'evy-driven Ornstein--Uhlenbeck processes when the drift admits a low-rank-plus-sparse approximation. A localized, increment-truncated pseudo-likelihood is regularized by nuclear and entrywise $\ell_1$ norms. A weighted joint-cone argument and restricted cancellation condition yield non-asymptotic Frobenius oracle inequalities for exact and approximate structure. The stochastic complexity is $rd+s\log d$, with explicit discretization, truncation, and L\'evy-regime sample-complexity terms. We derive operator- and infinity-norm score bounds from a Gaussian-width inequality and give a proximal-gradient algorithm. In 100-replicate experiments with mixed low-rank-plus-sparse drift, the estimator reduces mean relative Frobenius error by 4.4--5.9\% versus localized Lasso and wins 89--92\% of paired paths. Component diagnostics show that these gains concern recovery of the total drift rather than exact identification of its low-rank and sparse components. \end{abstract}
	
	\medskip
	
	\noindent \textbf{Keywords:} L\'evy-driven Ornstein--Uhlenbeck processes; low-rank plus sparse drift; nuclear norm; $\ell_1$-penalization; oracle inequalities; high-dimensional statistics.
	
	\medskip
	
	\noindent \textbf{MSC 2020:} Primary 62M05, 62J07; Secondary 60G51, 62H12, 60H10.
	
	\section{Introduction}
	
	High-dimensional Ornstein--Uhlenbeck (OU) processes provide continuous-time models for multivariate mean-reverting systems in which the drift matrix encodes linear dependence among coordinates. When the background driving process is L\'evy, the model additionally permits jumps, pure-jump dynamics, anisotropic shocks, and heavy tails. Sparse drift estimation has been developed for diffusion-type OU models \citep{GaiffasMatulewicz2019,CiolekMarushkevychPodolskij2020} and for L\'evy-driven OU processes under continuous and discrete observation schemes \citep{DexheimerStrauch2024,DexheimerJeszka2026}.
	
	Pure sparsity can be restrictive when dependence contains global common modes together with localized direct interactions. We therefore study drift matrices that can be represented or approximated by
	\[A_0=L_0+S_0,\]
	where $L_0$ has low effective rank and $S_0$ is sparse. Such decompositions are standard in noisy matrix decomposition, robust principal components, and latent-variable modelling \citep{ChandrasekaranSanghaviParriloWillsky2011,AgarwalNegahbanWainwright2012}. The statistical issue is not merely to combine a nuclear norm with an $\ell_1$ norm: the low-rank and sparse geometries can overlap, so the analysis must explicitly control cancellation between component errors.
	
	In high-dimensional mean-reverting financial systems, this decomposition has a direct applied interpretation. The sparse component $S_0$ can represent a relatively small set of direct bilateral or sector-specific interactions, while the low-rank component $L_0$ captures market-wide or sectoral common modes that generate dense dependence across the system. L\'evy innovations additionally accommodate abrupt shocks that are not well represented by purely Gaussian diffusion models. In such settings, estimation of the total drift is relevant for forecasting mean reversion and assessing the propagation of shocks through the system.
	
	We observe the stationary solution of
	\begin{equation}
		dX_t=-A_0X_t\,dt+dZ_t,\qquad t\ge0, \label{eq:ou}
	\end{equation}
	at $t_k=k\Delta_n$, $k=0,\ldots,n$, over horizon $T=n\Delta_n$. Following the discrete L\'evy--OU framework of \citet{DexheimerJeszka2026}, observations are localized to a bounded set and increments are truncated. The proposed estimator is
	\begin{equation}
		(\widehat L,\widehat S)\in \arg\min_{L,S\in\mathbb R^{d\times d}} \left\{ \mathcal L_n^D(L+S)+\lambda_*\|L\|_*+\lambda_1\|S\|_1 \right\},
		\qquad \widehat A=\widehat L+\widehat S, \label{eq:estimator_intro}
	\end{equation}
	where $\mathcal L_n^D$ is the localized and increment-truncated pseudo-likelihood.
	
	The analysis has four main ingredients. First, the penalized basic inequality yields one \emph{weighted joint cone}; it does not yield independent low-rank and sparse cones. We retain that joint geometry throughout. Second, structural identifiability is expressed through a restricted cancellation coefficient on this cone. Third, the stochastic score is controlled in both operator and entrywise maximum norms. Applying the Gaussian-width inequality of \citet[Theorem~4.8]{DexheimerJeszka2026} to rank-one and coordinate indexing sets produces the dimension factors $\sqrt d$ and $\sqrt{\log(d^2)}$, respectively. Hence the low-rank stochastic contribution is proportional to $rd/T$, not $r\log d/T$. Fourth, the discretization and truncation terms are kept explicit and are tied to the same high-frequency and tail regimes as the underlying discrete L\'evy--OU theory.
	
	For exact rank $r$ and sparsity $s$, the resulting high-probability Frobenius bound has the schematic form
	\begin{equation}
		\|\widehat A-A_0\|_F^2 \lesssim d^2\Delta_n^2+\frac{d}{T} +\frac{\gamma(\Delta_n)}{T} \left\{rd+s\log d\right\},
		\label{eq:intro_rate}
	\end{equation}
	under the standard truncation regimes and uniformly well-conditioned localized covariance. The factor $\gamma(\Delta_n)$ is given explicitly in \eqref{eq:gamma}. We also prove an oracle version of the result in which $A_0$ need only be well approximated by a rank-$r$ plus $s$-sparse matrix. This introduces an approximation term rather than requiring exact structure.
	
	The paper also supplies the computational side of the estimator. Product-space proximal gradient alternates singular-value and entrywise soft thresholding, while chronological validation selects an overall regularization scale and the relative weight of the nuclear penalty. A 100-replicate Monte Carlo study compares the combined estimator with localized Lasso for $d=30$ under Brownian-plus-compound-Poisson noise. For a fixed sampling mesh $\Delta_n=0.05$, the mixed low-rank-plus-sparse design is studied at horizons $T=50,100,200$, together with a sparse-only benchmark at $T=100$. The combined estimator lowers mean relative Frobenius error in every design and wins 89--92\% of paired paths in the mixed cases. Additional diagnostics distinguish total-drift estimation from component identification: sparse-component error decreases with the horizon, whereas numerical rank and low-rank subspace recovery remain substantially less stable.
	
	\section{Model, observation scheme, and estimator}\label{sec:model}
	
	\subsection{Stationary L\'evy-driven OU model}
	
	For a matrix $M$, let $\|M\|_F$, $\|M\|_{\mathrm{op}}$, $\|M\|_*$, $\|M\|_1$, and $\|M\|_\infty$ denote the Frobenius, operator, nuclear, entrywise $\ell_1$, and entrywise maximum norms. Define
	\[\mathcal M_+(\mathbb R^d):=\left\{A\in\mathbb R^{d\times d}:\Re(\lambda)>0
	\text{ for every eigenvalue }\lambda\text{ of }A\right\}.\]
	Equivalently, $A\in\mathcal M_+(\mathbb R^d)$ if $\|e^{-tA}\|_{\mathrm{op}}\to0$ as $t\to\infty$.
	
	Let $Z$ be a $d$-dimensional L\'evy process with generating triplet $(\beta,C,\nu)$, and assume that $Z$ is a square-integrable martingale. Write
	\begin{equation}
		\nu_2:=\int_{\mathbb R^d}zz^\top\,\nu(dz).\label{eq:nu2}
	\end{equation}
	We consider \eqref{eq:ou} with $A_0\in\mathcal M_+(\mathbb R^d)$. Square integrability gives the logarithmic moment condition needed for existence of an invariant law, and the standard OU--L\'evy theory gives a unique invariant distribution $\pi$; see \citep{SatoYamazato1984,Masuda2004}. We assume $X_0\sim\pi$ and that $X_0$ is independent of $Z$. Under the polynomial moment condition imposed below, $X$ is exponentially $\beta$-mixing \citep{Masuda2004}.
	
	We observe $X$ at $t_k=k\Delta_n$, $k=0,\ldots,n$, and write
	\[X_{k-1}:=X_{t_{k-1}},\qquad \Delta X_k:=X_{t_k}-X_{t_{k-1}},\qquad T=n\Delta_n.\]
	Let $B\subset\mathbb R^d$ be a bounded localization set of radius $b$, and let $\eta>0$ be an increment threshold. Define
	\begin{equation}
		I_k:=\mathbbm 1_{\{X_{k-1}\in B\}} \mathbbm 1_{\{\|\Delta X_k\|\le\eta\}}.
		\label{eq:filter}
	\end{equation}
	The localized and increment-truncated empirical covariance is
	\begin{equation}
		\widehat C_n^{B,\eta}:=\frac{1}{n}\sum_{k=1}^n I_kX_{k-1}X_{k-1}^\top. \label{eq:Chat}
	\end{equation}
	Its untruncated localized population target is
	\begin{equation}
		C_\infty(B):= \mathbb E_\pi\left[X_0X_0^\top\mathbbm1_{\{X_0\in B\}}\right].
		\label{eq:Cinf}
	\end{equation}
	The relation between \eqref{eq:Chat} and \eqref{eq:Cinf} is handled through the increment-truncated covariance concentration result recalled in Section~\ref{sec:probabilistic}; no equality of their expectations is assumed.
	
	\begin{assumption}[OU/L\'evy regularity]\label{ass:A0}
		The following conditions hold.
		\begin{enumerate}
			\item $A_0\in\mathcal M_+(\mathbb R^d)$ and $Z$ is a square-integrable L\'evy martingale whose L\'evy measure admits a $p$th moment for some $p>2$.
			\item $X_0\sim\pi$ is independent of $Z$.
			\item $\lambda_{\min}(C_\infty(B))\ge c_0>0$ and $\lambda_{\max}(C_\infty(B))\le C_0<\infty$ for the localization sequence under consideration.
		\end{enumerate}
	\end{assumption}
	
	\subsection{Localized pseudo-likelihood}
	
	We use the discrete pseudo-likelihood normalization of \citet{DexheimerJeszka2026}:
	\begin{equation}
		\mathcal L_n^D(A) :=\frac{1}{T}\sum_{k=1}^n I_k\langle AX_{k-1},\Delta X_k\rangle+\frac{\Delta_n}{2T}\sum_{k=1}^n I_k\|AX_{k-1}\|_2^2. \label{eq:pseudoloss}
	\end{equation}
	Up to an additive term independent of $A$ and multiplication by a positive scalar, minimizing \eqref{eq:pseudoloss} is equivalent to minimizing the filtered quadratic residual
	\[\sum_{k=1}^n I_k\|\Delta X_k+\Delta_n AX_{k-1}\|_2^2.\]
	The estimator is
	\begin{equation}
		(\widehat L,\widehat S) \in\arg\min_{L,S} \left\{\mathcal L_n^D(L+S)+\lambda_*\|L\|_*+\lambda_1\|S\|_1\right\}, \qquad \widehat A=\widehat L+\widehat S. \label{eq:estimator}
	\end{equation}
	
	For any $H\in\mathbb R^{d\times d}$, the loss has the exact expansion
	\begin{equation}
		\mathcal L_n^D(A_0+H)-\mathcal L_n^D(A_0)-\langle G_n,H\rangle=\frac{1}{2} \tr\!\left(H\widehat C_n^{B,\eta}H^\top\right), \label{eq:exactexpansion} \end{equation}
	where
	\begin{equation}
		G_n:=\nabla\mathcal L_n^D(A_0)=\frac{1}{T}\sum_{k=1}^n I_k \bigl(\Delta X_k+\Delta_nA_0X_{k-1}\bigr)X_{k-1}^\top. \label{eq:score}
	\end{equation}
	Thus all non-quadratic effects enter through the score; there is no unspecified second-order ``bias term.''
	
	\section{Joint low-rank--sparse geometry} \label{sec:geometry}
	
	Let a comparator $\bar A=\bar L+\bar S$ be given with $\rank(\bar L)\le r$ and $\|\bar S\|_0\le s$. Write a compact SVD $\bar L=U\Sigma V^\top$, and define the tangent and support spaces
	\begin{align}
		\mathcal T(\bar L)&:=\{UY^\top+XV^\top:X,Y\in\mathbb R^{d\times r}\},\label{eq:tangent}\\
		\mathcal S_\Omega&:=\{M\in\mathbb R^{d\times d}:\supp(M)\subseteq\Omega\},
		\qquad \Omega:=\supp(\bar S).\label{eq:support}
	\end{align}
	Let $P_\mathcal T$, $P_{\mathcal T^\perp}$, $P_\Omega$, and $P_{\Omega^c}$ denote the corresponding orthogonal projections.
	
	For $\rho>0$, define the weighted joint cone
	\begin{equation}
		\mathcal C_4(\rho;\bar L,\bar S) :=\left\{(H_L,H_S): \|P_{\mathcal T^\perp}H_L\|_*+\rho\|P_{\Omega^c}H_S\|_1 \le4\left( \|P_\mathcal T H_L\|_*+\rho\|P_\Omega H_S\|_1 	\right)\right\}.
		\label{eq:jointcone}
	\end{equation}
	The cone is joint. In particular, membership in \eqref{eq:jointcone} does not assert either of the two corresponding componentwise cone inequalities.
	
	\begin{definition}\label{def:mu}
		The \emph{restricted cancellation coefficient} associated with
		$\mathcal C_4(\rho;\bar L,\bar S)$ is
		\begin{equation}
			\mu_\rho(\bar L,\bar S)
			:=
			\sup
			\frac{|\langle H_L,H_S\rangle|}
			{\|H_L\|_F\|H_S\|_F},
			\label{eq:mu}
		\end{equation}
		where the supremum is taken over
		$(H_L,H_S)\in\mathcal C_4(\rho;\bar L,\bar S)$
		with both components nonzero.
	\end{definition}
	
	\begin{assumption}[Restricted non-cancellation]\label{ass:angle}
		For $\rho=\lambda_1/\lambda_*$, there exists $\bar\mu<1$ such that $\mu_\rho(\bar L,\bar S)\le\bar\mu$ for every comparator used in the oracle bound. Set 
		\begin{equation}
			\alpha:=\sqrt{1-\bar\mu}>0. \label{eq:alpha}
		\end{equation}
	\end{assumption}
	
	\begin{proposition}\label{prop:angle}
		Under Assumption~\ref{ass:angle}, every $(H_L,H_S)\in\mathcal C_4(\rho;\bar L,\bar S)$ satisfies
		\begin{equation}
			\|H_L+H_S\|_F\ge\alpha\bigl(\|H_L\|_F^2+\|H_S\|_F^2\bigr)^{1/2}.
			\label{eq:compatibility}	\end{equation}
	\end{proposition}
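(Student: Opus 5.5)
Expanding the square gives
\[
\|H_L+H_S\|_F^2=\|H_L\|_F^2+\|H_S\|_F^2+2\langle H_L,H_S\rangle .
\]
We bound the cross term from below using the restricted cancellation coefficient. We first handle the degenerate case. If $H_L=0$ or $H_S=0$, then $\|H_L+H_S\|_F^2=\|H_L\|_F^2+\|H_S\|_F^2$. Since $\alpha\le1$, the inequality \eqref{eq:compatibility} holds trivially. Note also that the cone condition \eqref{eq:jointcone} is imposed jointly, so a pair with one zero component may still lie in $\mathcal C_4$; these pairs are simply the ones excluded from the supremum in Definition~\ref{def:mu}.

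For pairs with both components nonzero, membership in $\mathcal C_4(\rho;\bar L,\bar S)$ with $\rho=\lambda_1/\lambda_*$ and Assumption~\ref{ass:angle} give
\[
|\langle H_L,H_S\rangle|\le\bar\mu\,\|H_L\|_F\|H_S\|_F .
\]
Combining this with the elementary inequality $2ab\le a^2+b^2$, we obtain
\[
2\langle H_L,H_S\rangle\ge-2\bar\mu\|H_L\|_F\|H_S\|_F\ge-\bar\mu\bigl(\|H_L\|_F^2+\|H_S\|_F^2\bigr).
\]
Substituting into the expansion yields
\[
\|H_L+H_S\|_F^2\ge(1-\bar\mu)\bigl(\|H_L\|_F^2+\|H_S\|_F^2\bigr)=\alpha^2\bigl(\|H_L\|_F^2+\|H_S\|_F^2\bigr).
\]
Taking square roots, which is valid since $\bar\mu<1$ makes $\alpha$ a positive real number, gives \eqref{eq:compatibility}.

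No step here is a real obstacle. The only point needing care is that the supremum in \eqref{eq:mu} is taken over the full joint cone, so no componentwise cone membership is used. We also note that $\bar\mu<1$ is exactly what keeps $\alpha>0$, and that the case $\bar\mu<0$ cannot occur because $\mu_\rho\ge0$. The sharper constant $\sqrt{1-\bar\mu^2}$ on $\max(\|H_L\|_F,\|H_S\|_F)$ is available but unnecessary.
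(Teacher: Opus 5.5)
Your proof is correct and is essentially the paper's argument: expand $\|H_L+H_S\|_F^2$, bound the cross term below by $-2\bar\mu\|H_L\|_F\|H_S\|_F$ using the restricted cancellation coefficient, and absorb it with $2xy\le x^2+y^2$ to get the factor $1-\bar\mu=\alpha^2$. Your separate treatment of pairs with a zero component, which are excluded from the supremum in Definition~\ref{def:mu}, covers a case the paper's proof leaves implicit.
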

	
	\begin{proof}
		Let $x=\|H_L\|_F$ and $y=\|H_S\|_F$. By Definition~\ref{def:mu},
		\[\|H_L+H_S\|_F^2	=x^2+y^2+2\langle H_L,H_S\rangle \ge x^2+y^2-2\bar\mu xy
		\ge(1-\bar\mu)(x^2+y^2), \]where the last step uses $2xy\le x^2+y^2$.
	\end{proof}
	
	A familiar first-order diagnostic is the tangent--support angle	\begin{equation}
		\zeta(\bar L,\Omega):=\|P_\Omega P_{\mathcal T(\bar L)}\|_{F\to F}. \label{eq:zeta}
	\end{equation}
	For $H_L\in\mathcal T(\bar L)$ and $H_S\in\mathcal S_\Omega$, $|\langle H_L,H_S\rangle|\le\zeta\|H_L\|_F\|H_S\|_F$. Thus $\zeta<1$ rules out exact first-order overlap of the tangent and support spaces. Assumption~\ref{ass:angle} extends this transversality to the full weighted error cone, where off-tangent and off-support components generated by penalization must also be controlled. This is the precise geometric role of rank--sparsity incoherence in the present analysis; compare \citep{ChandrasekaranSanghaviParriloWillsky2011,AgarwalNegahbanWainwright2012}.
	
	\section{A deterministic oracle inequality} \label{sec:deterministic}
	
	Define the empirical quadratic norm
	\begin{equation}
		\|H\|_n^2:=\tr\!\left(H\widehat C_n^{B,\eta}H^\top\right).
		\label{eq:empnorm}
	\end{equation}
	We isolate the stochastic score from the discretization and truncation effects by writing
	\begin{equation}
		G_n=M_n+R_n.
		\label{eq:score_split}
	\end{equation}
	The probabilistic section constructs this decomposition explicitly and verifies the following event.
	
	\begin{definition}\label{def:good}
		The \emph{good event} $\mathcal E(\kappa,K,B_n)$, for constants $0<\kappa\le K$, penalty levels $(\lambda_*,\lambda_1)$, and $B_n\ge0$, is the event on which
		\begin{align}
			\kappa I_d\preceq\widehat C_n^{B,\eta}\preceq K I_d,\label{eq:curvature_event}\\
			\|M_n\|_{\mathrm{op}}\le\lambda_*/2,\qquad \|M_n\|_\infty\le\lambda_1/2,\label{eq:score_event}
		\end{align}
		and, uniformly for all $H\in\mathbb R^{d\times d}$,
		\begin{equation}
			|\langle R_n,H\rangle|\le\frac18\|H\|_n^2+B_n.\label{eq:remainder_event}
		\end{equation}
	\end{definition}

	\begin{theorem}\label{thm:det_exact}
		Suppose $A_0=L_0+S_0$, with $\rank(L_0)\le r$ and $\|S_0\|_0\le s$, and suppose Assumption~\ref{ass:angle} holds for $(L_0,S_0)$. On $\mathcal E(\kappa,K,B_n)$, every solution of \eqref{eq:estimator} satisfies
		\begin{equation}
			\|\widehat A-A_0\|_F^2 	\le \frac{56}{3\kappa}B_n
			+\frac{196}{9\kappa^2\alpha^2} \left(2r\lambda_*^2+s\lambda_1^2\right).
			\label{eq:det_exact}
		\end{equation}
	\end{theorem}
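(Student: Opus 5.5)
The argument is the standard basic-inequality route, carried out on the joint cone. Set $H_L=\widehat L-L_0$, $H_S=\widehat S-S_0$ and $H=H_L+H_S=\widehat A-A_0$. Optimality of $(\widehat L,\widehat S)$ against the feasible pair $(L_0,S_0)$, combined with the exact expansion \eqref{eq:exactexpansion}, gives
\[
\tfrac12\|H\|_n^2\le -\langle M_n,H\rangle-\langle R_n,H\rangle+\lambda_*\bigl(\|L_0\|_*-\|\widehat L\|_*\bigr)+\lambda_1\bigl(\|S_0\|_1-\|\widehat S\|_1\bigr).
\]
The score term is split as $\langle M_n,H_L\rangle+\langle M_n,H_S\rangle$. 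Duality and \eqref{eq:score_event} bound it by $\frac{\lambda_*}{2}\|H_L\|_*+\frac{\lambda_1}{2}\|H_S\|_1$. The remainder is bounded by $\frac18\|H\|_n^2+B_n$ using \eqref{eq:remainder_event}.

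For the penalties, decomposability of the nuclear norm with respect to $\mathcal T(L_0)$ and of the $\ell_1$ norm with respect to $\Omega$ gives
\[
\|L_0\|_*-\|\widehat L\|_*\le\|P_\mathcal T H_L\|_*-\|P_{\mathcal T^\perp}H_L\|_*,
\]
and similarly for $S$. Writing $a=\|P_\mathcal T H_L\|_*+\rho\|P_\Omega H_S\|_1$ and $b=\|P_{\mathcal T^\perp}H_L\|_*+\rho\|P_{\Omega^c}H_S\|_1$ with $\rho=\lambda_1/\lambda_*$, these bounds combine into
\[
\tfrac38\|H\|_n^2\le B_n+\lambda_*\Bigl(\tfrac32 a-\tfrac12 b\Bigr).
\]
The argument then splits into two cases.

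\emph{Case 1: $b\le 4a$.} Here $(H_L,H_S)\in\mathcal C_4(\rho;L_0,S_0)$. The tangent space has rank at most $2r$, so $\|P_\mathcal T H_L\|_*\le\sqrt{2r}\|H_L\|_F$, and $\|P_\Omega H_S\|_1\le\sqrt s\|H_S\|_F$. Cauchy--Schwarz then gives
\[
\lambda_* a\le\sqrt{2r\lambda_*^2+s\lambda_1^2}\,\bigl(\|H_L\|_F^2+\|H_S\|_F^2\bigr)^{1/2},
\]
and Proposition~\ref{prop:angle} bounds the last factor by $\alpha^{-1}\|H\|_F$. Together with $\|H\|_n^2\ge\kappa\|H\|_F^2$ from \eqref{eq:curvature_event}, this yields
\[
\tfrac38\kappa\|H\|_F^2\le B_n+\tfrac{3}{2\alpha}\sqrt{2r\lambda_*^2+s\lambda_1^2}\,\|H\|_F.
\]
Solving this quadratic inequality, using $xy\le\epsilon x^2+y^2/(4\epsilon)$ to absorb a fraction of the left side, produces constants of the form $\frac{c_1}{\kappa}B_n+\frac{c_2}{\kappa^2\alpha^2}(2r\lambda_*^2+s\lambda_1^2)$. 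The stated $56/3$ and $196/9=(14/3)^2$ suggest $\epsilon$ is chosen so that $(8/3)\cdot(7/4)$-type factors appear. I would tune $\epsilon$ to match these exactly, or else accept slightly different constants.

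\emph{Case 2: $b>4a$.} Then $\tfrac32 a-\tfrac12 b<-\tfrac12 a\le0$, so $\tfrac38\kappa\|H\|_F^2\le B_n$. This gives $\|H\|_F^2\le\frac{8}{3\kappa}B_n$, which is already dominated by the first term of \eqref{eq:det_exact}.

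I expect the main obstacle to be Case 1's tightness and the cone bookkeeping. The cone is joint, so Proposition~\ref{prop:angle} must be applied to the pair rather than componentwise, and the weighting by $\rho$ must be consistent with $\lambda_1\|P_\Omega H_S\|_1=\lambda_*\rho\|P_\Omega H_S\|_1$. Getting exactly the constants $56/3$ and $196/9$ will require careful choice of how the $\frac18\|H\|_n^2$ remainder and the Young split are allocated. If the cone constant 4 turns out to need a sharper score split (e.g.\ the half-factors), I would recheck that $\frac32 a-\frac12 b$ rather than, say, $3a-b$ is what appears.
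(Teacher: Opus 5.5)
Your proof is correct and reaches the same weighted basic inequality as the paper, $\frac38\|H\|_n^2+\frac12\lambda_*b\le\frac32\lambda_*a+B_n$; the paper writes $P_T=\lambda_*a$ and $P_\perp=\lambda_*b$. Your closing worry is unfounded: the factors $\frac12$ in \eqref{eq:score_event} give exactly $\frac32a-\frac12b$, as you computed. The difference lies in the case split.

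You split on cone membership, so in the cone case you carry $B_n$ and the complexity term together and must solve a quadratic inequality. The paper instead splits on $B_n\le P_T/4$ versus $B_n>P_T/4$.

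In the paper's first case, dropping the quadratic term gives $P_\perp\le\frac72P_T$, so membership in $\mathcal C_4$ is \emph{derived} rather than assumed. Then $\frac{3\kappa}{8}\|H\|_F^2\le\frac74P_T\le\frac{7}{4\alpha}\Gamma\|H\|_F$, with $\Gamma:=\sqrt{2r\lambda_*^2+s\lambda_1^2}$, gives $\frac{196}{9\kappa^2\alpha^2}\Gamma^2$ directly, with no Young step. In the second case, $P_T<4B_n$ gives $\frac{3\kappa}{8}\|H\|_F^2\le 7B_n$ without using the cone or Proposition~\ref{prop:angle}. Each case thus isolates a single error source, which is where the constants $56/3$ and $196/9$ come from.

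Your split is the more standard one and actually yields sharper constants, so there is nothing to tune to ``match'' the paper. Taking $\epsilon=3\kappa/16$ in your Young step gives $\|H\|_F^2\le\frac{16}{3\kappa}B_n+\frac{16}{\kappa^2\alpha^2}\Gamma^2$ in the cone case. Your non-cone case gives $\frac{8}{3\kappa}B_n$. Since $16/3<56/3$ and $16<196/9$, both cases imply \eqref{eq:det_exact}.

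You should replace the hedge about ``slightly different constants'' with this explicit computation. As written, the proposal leaves the final constant check undone, though it goes through with room to spare.
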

	
	\begin{proof}
		Set
		\[H_L=\widehat L-L_0,\qquad H_S=\widehat S-S_0,\qquad H=H_L+H_S. \]
		Define
		\begin{align*}
			P_T&:=\lambda_*\|P_\mathcal T H_L\|_*+\lambda_1\|P_\Omega H_S\|_1,\\ P_\perp&:=\lambda_*\|P_{\mathcal T^\perp}H_L\|_*+\lambda_1\|P_{\Omega^c}H_S\|_1.
		\end{align*}
		Optimality, \eqref{eq:exactexpansion}, decomposability of the nuclear and $\ell_1$ norms, and \eqref{eq:score_event} give
		\[\frac12\|H\|_n^2+\frac12P_\perp \le\frac32P_T+|\langle R_n,H\rangle|.\]
		Using \eqref{eq:remainder_event},
		\begin{equation}
			\frac38\|H\|_n^2+\frac12P_\perp \le\frac32P_T+B_n. \label{eq:joint_basic_final}
		\end{equation}
		This is one weighted inequality.
		
		If $B_n\le P_T/4$, dropping the quadratic term in \eqref{eq:joint_basic_final} gives $P_\perp\le(7/2)P_T<4P_T$, hence $(H_L,H_S)$ belongs to the joint cone. Since $\rank(P_\mathcal T H_L)\le2r$ and $P_\Omega H_S$ has at most $s$ nonzero entries,
		\begin{align*}
			P_T&\le\lambda_*\sqrt{2r}\|H_L\|_F+\lambda_1\sqrt{s}\|H_S\|_F\\
			&\le\sqrt{2r\lambda_*^2+s\lambda_1^2} \bigl(\|H_L\|_F^2+\|H_S\|_F^2\bigr)^{1/2}\\
			&\le\alpha^{-1}\sqrt{2r\lambda_*^2+s\lambda_1^2}\|H\|_F.
		\end{align*}
		Applying the lower covariance bound to \eqref{eq:joint_basic_final} yields
		\[\frac{3\kappa}{8}\|H\|_F^2\le\frac74P_T,\]
		and therefore
		\[\|H\|_F^2\le\frac{196}{9\kappa^2\alpha^2} (2r\lambda_*^2+s\lambda_1^2).
		\]
		
		If $B_n>P_T/4$, then $P_T<4B_n$. Equation \eqref{eq:joint_basic_final} gives
		\[\frac{3\kappa}{8}\|H\|_F^2 \le7B_n,\]
		so $\|H\|_F^2\le56B_n/(3\kappa)$. Combining the two cases proves \eqref{eq:det_exact}.
	\end{proof}
	
	The preceding case split replaces any need to assume that a generic positive bias is eventually dominated by a stochastic term.
	
	\begin{theorem}\label{thm:det_oracle}
		Let $\bar A=\bar L+\bar S$ be any comparator with $\rank(\bar L)\le r$, $\|\bar S\|_0\le s$, satisfying Assumption~\ref{ass:angle}. On $\mathcal E(\kappa,K,B_n)$,
		\begin{equation}
			\begin{split}
				\|\widehat A-A_0\|_F^2
				\le{}&
				C_1\left(1+\frac K\kappa\right)\|\bar A-A_0\|_F^2
				+\frac{C_2}{\kappa}B_n\\
				&+\frac{C_3}{\kappa^2\alpha^2}
				\left(2r\lambda_*^2+s\lambda_1^2\right),
			\end{split}
			\label{eq:oracle_approx}
		\end{equation}
		for universal constants $C_1,C_2,C_3>0$. Consequently, the right-hand side may be minimized over admissible $(r,s,\bar L,\bar S)$.
	\end{theorem}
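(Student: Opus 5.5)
We adapt the proof of Theorem~\ref{thm:det_exact}, comparing the estimator with $\bar A=\bar L+\bar S$ instead of $A_0$. Set
\[
H_L=\widehat L-\bar L,\qquad H_S=\widehat S-\bar S,\qquad H=H_L+H_S=\widehat A-\bar A,\qquad D:=\bar A-A_0 .
\]
Then $\widehat A-A_0=H+D$. Comparing the objective at $(\widehat L,\widehat S)$ with its value at $(\bar L,\bar S)$ gives
\[
\mathcal L_n^D(\widehat A)-\mathcal L_n^D(\bar A)\le\lambda_*\bigl(\|\bar L\|_*-\|\widehat L\|_*\bigr)+\lambda_1\bigl(\|\bar S\|_1-\|\widehat S\|_1\bigr).
\]
We expand both losses around $A_0$ using \eqref{eq:exactexpansion}. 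The left side equals $\langle G_n,H\rangle+\tfrac12\|H+D\|_n^2-\tfrac12\|D\|_n^2$. Writing $\tfrac12\|H+D\|_n^2-\tfrac12\|D\|_n^2=\tfrac12\|H\|_n^2+\langle H,D\rangle_n$, the inequality becomes
\[
\tfrac12\|H\|_n^2\le -\langle G_n,H\rangle-\langle H,D\rangle_n+\text{(penalty difference)} .
\]
Decomposability with respect to $\mathcal T(\bar L)$ and $\Omega=\supp(\bar S)$ bounds the penalty difference by $P_T-P_\perp$, with $P_T,P_\perp$ defined as in the proof of Theorem~\ref{thm:det_exact}. The score event \eqref{eq:score_event} bounds $|\langle M_n,H\rangle|$ by $\tfrac12(P_T+P_\perp)$. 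The remainder event \eqref{eq:remainder_event} bounds $|\langle R_n,H\rangle|$ by $\tfrac18\|H\|_n^2+B_n$. The new cross term is handled by Cauchy--Schwarz and Young: $|\langle H,D\rangle_n|\le\tfrac18\|H\|_n^2+2\|D\|_n^2\le\tfrac18\|H\|_n^2+2K\|D\|_F^2$, using the upper curvature bound. Collecting terms gives
\[
\tfrac14\|H\|_n^2+\tfrac12P_\perp\le\tfrac32P_T+B_n+2K\|D\|_F^2 .
\]
This is the analogue of \eqref{eq:joint_basic_final}, with the effective bias $B_n':=B_n+2K\|D\|_F^2$ in place of $B_n$.

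We then repeat the case split from Theorem~\ref{thm:det_exact} with $B_n'$.

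\emph{Case $B_n'\le P_T/4$.} Then $P_\perp\le\tfrac72P_T$, so $(H_L,H_S)\in\mathcal C_4(\rho;\bar L,\bar S)$. Assumption~\ref{ass:angle} holds for the comparator, so Proposition~\ref{prop:angle} applies. Together with $\rank(P_\mathcal T H_L)\le2r$ and $|\Omega|\le s$, this gives
\[
P_T\le\alpha^{-1}\sqrt{2r\lambda_*^2+s\lambda_1^2}\,\|H\|_F .
\]
The lower curvature bound gives $\tfrac{\kappa}{4}\|H\|_F^2\le\tfrac74P_T$. Hence $\|H\|_F^2\lesssim(2r\lambda_*^2+s\lambda_1^2)/(\kappa^2\alpha^2)$.

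\emph{Case $B_n'>P_T/4$.} Then $\tfrac{\kappa}{4}\|H\|_F^2\le7B_n'$. Hence $\|H\|_F^2\lesssim B_n/\kappa+(K/\kappa)\|D\|_F^2$.

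Finally, $\|\widehat A-A_0\|_F^2\le2\|H\|_F^2+2\|D\|_F^2$. The extra $2\|D\|_F^2$ accounts for the "$1$" in the factor $1+K/\kappa$, and the bound \eqref{eq:oracle_approx} follows with explicit universal constants. Since the comparator was arbitrary, the bound can be minimized over admissible $(r,s,\bar L,\bar S)$.

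The main obstacle is the cross term $\langle H,D\rangle_n$. It must be absorbed using only a fraction of the quadratic term, so that enough curvature remains for the cone argument. The approximation error must also enter purely as bias, so that it joins $B_n$ before the case split and no separate cone condition is needed. A second point to check is that the cone in Assumption~\ref{ass:angle} is the one built from the comparator's tangent and support spaces. Assumption~\ref{ass:angle} is stated "for every comparator used in the oracle bound", so this holds.
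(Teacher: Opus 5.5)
Your proposal is correct and follows the paper's proof essentially step for step. You compare the objective at $(\widehat L,\widehat S)$ and $(\bar L,\bar S)$, expand around $A_0$, absorb the cross term $\langle H,D\rangle_n$ by Young's inequality with the same constants $\tfrac18$ and $2$, rerun the two-case argument of Theorem~\ref{thm:det_exact} with $B_n+2K\|D\|_F^2$ in place of $B_n$, and conclude via $\|\widehat A-A_0\|_F^2\le2\|H\|_F^2+2\|D\|_F^2$; your explicit case bounds ($\tfrac{\kappa}{4}\|H\|_F^2\le\tfrac74P_T$, respectively $\le 7B_n'$) fill in what the paper calls ``collecting universal constants.''
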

	
	\begin{proof}
		Set $E=\bar A-A_0$ and $H=(\widehat L-\bar L)+(\widehat S-\bar S)$. Comparing the objective at $(\widehat L,\widehat S)$ and $(\bar L,\bar S)$ and expanding the loss around $A_0$ introduces the additional covariance inner product $\langle E,H\rangle_n$. By Young's inequality,
		\[|\langle E,H\rangle_n| \le\frac18\|H\|_n^2+2\|E\|_n^2.\]
		Together with \eqref{eq:remainder_event}, the analogue of \eqref{eq:joint_basic_final} is
		\[\frac14\|H\|_n^2+\frac12P_\perp \le\frac32P_T+B_n+2\|E\|_n^2.\]
		Apply the same two-case argument with $B_n+2\|E\|_n^2$ in place of $B_n$, use $\|E\|_n^2\le K\|E\|_F^2$, and finally $\|\widehat A-A_0\|_F^2\le2\|H\|_F^2+2\|E\|_F^2$. Collecting universal constants gives \eqref{eq:oracle_approx}.
	\end{proof}
	
	Theorem~\ref{thm:det_oracle} permits both singular-value tails and small off-support entries through the approximation term. This parallels the approximate-structure perspective in noisy matrix decomposition \citep{AgarwalNegahbanWainwright2012}, while the probabilistic verification below is specific to discretely observed L\'evy--OU dynamics.
	
	\section{Probabilistic verification for the L\'evy--OU loss} \label{sec:probabilistic}
	
	\subsection{Exact innovation representation and concentration scale}
	
	The exact OU transition over one sampling interval is
	\begin{equation}
		\Delta X_k
		=\bigl(e^{-\Delta_nA_0}-I_d\bigr)X_{k-1}+\Delta\widetilde Z_k,
		\qquad \Delta\widetilde Z_k
		:=\int_{t_{k-1}}^{t_k}e^{-(t_k-u)A_0}\,dZ_u.\label{eq:innovation}\end{equation}
	The innovations $\Delta\widetilde Z_k$ are i.i.d. and independent of $\mathcal F_{t_{k-1}}$. Define
	\begin{equation}
		\gamma(\Delta_n) :=\lambda_{\max}(C+\nu_2) \left(\lambda_{\max}(C_\infty(B))\vee1\right)
		\exp\!\left(\Delta_n\|A_0\|_{\mathrm{op}}\right).
		\label{eq:gamma}
	\end{equation}
	This is the concentration scale used in the discrete sparse L\'evy--OU theory \citep[Theorem~3.1]{DexheimerJeszka2026}. For fixed model and localization constants it remains bounded as $\Delta_n\downarrow0$.
	
	The increment filter and the innovation filter differ by the deterministic drift displacement. Following \citet[Lemma~A.2]{DexheimerJeszka2026}, set
	\begin{equation}
		\eta_\pm
		:=\eta\pm b\left(e^{\Delta_n\|A_0\|_{\mathrm{op}}}-1\right).
		\label{eq:etapm}
	\end{equation}
	The assumption
	\begin{equation}
		\eta\ge2b\left(e^{\Delta_n\|A_0\|_{\mathrm{op}}}-1\right)
		\label{eq:eta_condition}
	\end{equation}
	implies $\eta_-\ge\eta/2>0$.
	
	\subsection{The centered score: operator and infinity norms}
	
	Using \eqref{eq:innovation}, the stochastic part of the score is obtained by centering the filtered innovation term conditionally on $\mathcal F_{t_{k-1}}$. Let
	\begin{equation}
		M_n
		:=\frac1T\sum_{k=1}^n
		\mathbbm1_{\{X_{k-1}\in B\}}
		\left[
		\Delta\widetilde Z_k\mathbbm1_{\{\|\Delta X_k\|\le\eta\}}
		-\mathbb E\!\left(\Delta\widetilde Z_k\mathbbm1_{\{\|\Delta X_k\|\le\eta\}}\mid\mathcal F_{t_{k-1}}\right)
		\right]X_{k-1}^\top.
		\label{eq:Mn}
	\end{equation}
	For a set $\mathcal D\subset\mathbb R^{d\times d}$, Theorem~4.8 of \citet{DexheimerJeszka2026} gives, on their quadratic-variation event $Q_T$, a uniform bound for the centered stochastic process indexed by $\mathcal D$ of order
	\begin{equation}
		\sqrt{\frac{\gamma(\Delta_n)}{T}}
		\left\{w(\mathcal D)+\sqrt{\log(2/\delta)}\,\rad(\mathcal D)\right\},
		\label{eq:DJ_width}
	\end{equation}
	where $w(\mathcal D)$ is Gaussian width and $\rad(\mathcal D)=\sup_{H\in\mathcal D}\|H\|_F$.
	
	\begin{lemma}\label{lem:dual}
		There is a universal constant $C_S>0$ such that, whenever the sample-complexity conditions ensuring the event $Q_T$ hold, with probability at least $1-\delta_S$,
		\begin{align}
			\|M_n\|_{\mathrm{op}}
			&\le C_S\sqrt{\frac{\gamma(\Delta_n)}T}
			\left(2\sqrt d+\sqrt{\log(4/\delta_S)}\right),
			\label{eq:opscore}\\
			\|M_n\|_\infty
			&\le C_S\sqrt{\frac{\gamma(\Delta_n)}T}
			\left(\sqrt{2\log(2d^2)}+\sqrt{\log(4/\delta_S)}\right).
			\label{eq:infscore}
		\end{align}
	\end{lemma}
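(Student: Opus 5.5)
The plan is to derive both norm bounds from the Gaussian-width bound \eqref{eq:DJ_width} by writing each norm as a supremum of $\langle M_n,H\rangle$ over a suitable index set $\mathcal D$. For the operator norm take
\[\mathcal D_{\mathrm{op}}:=\{uv^\top:\|u\|_2=\|v\|_2=1\},\qquad \|M_n\|_{\mathrm{op}}=\sup_{H\in\mathcal D_{\mathrm{op}}}\langle M_n,H\rangle .\]
For the entrywise maximum norm take
\[\mathcal D_\infty:=\{\pm e_ie_j^\top:1\le i,j\le d\},\qquad \|M_n\|_\infty=\sup_{H\in\mathcal D_\infty}\langle M_n,H\rangle .\]
Including both signs makes the supremum equal the absolute value, so no separate two-sided argument is needed. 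Both sets have radius $\rad(\mathcal D)=1$, since $\|uv^\top\|_F=\|u\|_2\|v\|_2=1$ and $\|e_ie_j^\top\|_F=1$.

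Next I compute the Gaussian widths, with $W$ a $d\times d$ matrix of i.i.d.\ standard normals. For the rank-one set,
\[w(\mathcal D_{\mathrm{op}})=\mathbb E\|W\|_{\mathrm{op}}\le 2\sqrt d\]
by the Gordon/Slepian bound $\mathbb E\|W\|_{\mathrm{op}}\le\sqrt d+\sqrt d$. For the coordinate set, $w(\mathcal D_\infty)=\mathbb E\max_{i,j}|W_{ij}|$ is the expected maximum of $2d^2$ standard Gaussians, which is at most $\sqrt{2\log(2d^2)}$. These are exactly the dimension factors in \eqref{eq:opscore} and \eqref{eq:infscore}.

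Then I apply Theorem~4.8 of \citet{DexheimerJeszka2026}, in the form \eqref{eq:DJ_width}, separately to $\mathcal D_{\mathrm{op}}$ and $\mathcal D_\infty$ on the event $Q_T$, each with confidence level $\delta_S/2$. A union bound gives joint validity with probability at least $1-\delta_S$, and $\log(2/(\delta_S/2))=\log(4/\delta_S)$ produces the stated logarithmic term. The constant $C_S$ is the universal constant implicit in \eqref{eq:DJ_width}.

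The main obstacle is checking that the process controlled by their Theorem~4.8 is exactly $H\mapsto\langle M_n,H\rangle$, with $M_n$ as in \eqref{eq:Mn}. That is, the indicator placement (localization on $X_{k-1}$, truncation on $\Delta X_k$ rather than on $\Delta\widetilde Z_k$) and the conditional centering must match their construction. Their result is stated for sparse index sets, so I also need it to hold for arbitrary bounded $\mathcal D$, which is how it is quoted in \eqref{eq:DJ_width}. I would first reconcile the filters using $\eta_\pm$ from \eqref{eq:etapm} under \eqref{eq:eta_condition}, as in their Lemma~A.2. If a residual mismatch remains, I would move the difference into $R_n$ rather than $M_n$, so that the lemma itself is a direct specialization of the width bound. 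Any failure probability coming from $Q_T$ is covered by the lemma's hypothesis that the sample-complexity conditions ensuring $Q_T$ hold.
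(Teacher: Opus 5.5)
Your proposal is correct and follows the paper's proof essentially verbatim: you use the same rank-one and signed-coordinate index sets, each of radius one, the same Gaussian-width bounds $\mathbb E\|G\|_{\mathrm{op}}\le 2\sqrt d$ and $\sqrt{2\log(2d^2)}$, and the same direct substitution into \eqref{eq:DJ_width}. Your explicit union bound at level $\delta_S/2$ per index set only makes visible where the $\log(4/\delta_S)$ term comes from, which the paper leaves implicit.
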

	
	\begin{proof}
		For the operator norm take
		\[\mathcal D_{\mathrm{op}}=\{uv^\top:\|u\|_2=\|v\|_2=1\}.\]
		Then $\sup_{H\in\mathcal D_{\mathrm{op}}}\langle M_n,H\rangle=\|M_n\|_{\mathrm{op}}$, $\rad(\mathcal D_{\mathrm{op}})=1$, and for a standard Gaussian $d\times d$ matrix $G$,
		\[w(\mathcal D_{\mathrm{op}})=\mathbb E\|G\|_{\mathrm{op}}\le2\sqrt d.\]
		Substitution into \eqref{eq:DJ_width} gives \eqref{eq:opscore}. This $\sqrt d$ Gaussian width is the source of the $rd/T$ low-rank complexity.
		
		For the entrywise maximum norm use
		\[\mathcal D_\infty=\{\pm E_{ij}:1\le i,j\le d\}.\]
		Its radius is one and
		\[w(\mathcal D_\infty)=\mathbb E\max_{i,j}|G_{ij}| \le\sqrt{2\log(2d^2)}.\]
		Equation \eqref{eq:infscore} follows from the same theorem.
	\end{proof}
	
	Accordingly, it is sufficient to choose
	\begin{align}
		\lambda_* &=2C_S\sqrt{\frac{\gamma(\Delta_n)}T}
		\left(2\sqrt d+\sqrt{\log(4/\delta_S)}\right), \label{eq:lambdastar}\\
		\lambda_1
		&=2C_S\sqrt{\frac{\gamma(\Delta_n)}T} \left(\sqrt{2\log(2d^2)}+\sqrt{\log(4/\delta_S)}\right). \label{eq:lambdaone}
	\end{align}
	
	\subsection{Curvature of the increment-truncated covariance}
	
	The covariance result required here is precisely the increment-truncated result, not concentration around an incorrectly matched population covariance. Proposition~4.5 of \citet{DexheimerJeszka2026} combines localized covariance concentration with a matrix Freedman inequality. In the notation above, its consequence is that, for $T$ exceeding the corresponding finite-sample threshold, with probability at least $1-\delta_C$,
	\begin{align}
		\lambda_{\min}(\widehat C_n^{B,\eta})
		&\ge\frac14\lambda_{\min}(C_\infty(B))
		\mathbb P(\|\Delta\widetilde Z_1\|\le\eta_-),
		\label{eq:cov_lower}\\
		\lambda_{\max}(\widehat C_n^{B,\eta})
		&\le2\lambda_{\max}(C_\infty(B))
		\mathbb P(\|\Delta\widetilde Z_1\|\le\eta_+).
		\label{eq:cov_upper}
	\end{align}
	The displayed constants are convenient consequences of the events in that proposition; changing fixed numerical constants has no effect on the rates below.
	
	We use the truncation condition
	\begin{assumption}[Tail-filter condition]\label{ass:N}
		There is a constant $c_\eta>0$ such that
		\begin{equation}
			\mathbb P\left(\|\Delta\widetilde Z_1\|\ge\eta/2\right)
			\le c_\eta\frac{d\Delta_n}{T}.
			\label{eq:N}
		\end{equation}
	\end{assumption}
	If \eqref{eq:eta_condition}, Assumption~\ref{ass:N}, and $T\ge2c_\eta d\Delta_n$ hold, then
	$\mathbb P(\|\Delta\widetilde Z_1\|\le\eta_-)\ge1/2$. Hence \eqref{eq:cov_lower}--\eqref{eq:cov_upper} give the explicit choices
	\begin{equation}
		\kappa=\frac18\lambda_{\min}(C_\infty(B)),
		\qquad
		K=2\lambda_{\max}(C_\infty(B)).
		\label{eq:kappaK}
	\end{equation}
	In particular, curvature holds for every matrix direction $H$ once the covariance event occurs.
	
	This also makes the dimensional requirement transparent: $\widehat C_n^{B,\eta}$ has rank at most the number of retained design vectors. Positive-definite curvature therefore requires at least $d$ linearly independent retained observations. The finite-sample covariance conditions below guarantee this in the admissible high-frequency regime.
	
	\subsection{Discretization and truncation remainders}
	
	Let
	\begin{equation}
		R_\Delta:=e^{-\Delta_nA_0}-I_d+\Delta_nA_0.
		\label{eq:Rdelta}
	\end{equation}
	By \eqref{eq:innovation}, the non-centered part of the score consists of a deterministic Euler remainder involving $R_\Delta X_{k-1}$ and the conditional mean introduced by increment truncation. Proposition~4.2 and Lemma~4.6 of \citet{DexheimerJeszka2026} give uniform Young-inequality bounds of exactly the form needed in \eqref{eq:remainder_event}. To make the quantities explicit, define
	\begin{equation}
		\mathrm{Disc}_n
		:=C_D K\Delta_n^2\|A_0\|_F^4
		\left(1+\Delta_n\|A_0\|_F e^{\Delta_n\|A_0\|_F}\right)^2,
		\label{eq:disc}
	\end{equation}
	where $C_D$ is universal, and
	\begin{equation}
		\mathrm{Trunc}_n
		:=C_J\frac{\lambda_{\max}(C+\nu_2)e^{2\Delta_n\|A_0\|_{\mathrm{op}}}}{\Delta_n}
		\mathbb P\left(\|\Delta\widetilde Z_1\|>\eta_-\right),
		\label{eq:trunc}
	\end{equation}
	where $C_J$ is universal. Taylor's theorem gives
	$\|R_\Delta\|_F\lesssim\Delta_n^2\|A_0\|_F^2(1+\Delta_n\|A_0\|_Fe^{\Delta_n\|A_0\|_F})$, producing \eqref{eq:disc}. Conditional centering and Young's inequality applied to the removed innovation event produce \eqref{eq:trunc}. Consequently, after adjusting fixed constants,
	\begin{equation}
		|\langle R_n,H\rangle|
		\le\frac18\|H\|_n^2
		+\mathrm{Disc}_n+\mathrm{Trunc}_n
		\qquad\text{for all }H.
		\label{eq:remainder_verified}
	\end{equation}
	Thus Definition~\ref{def:good} holds with
	\begin{equation}
		B_n=\mathrm{Disc}_n+\mathrm{Trunc}_n.
		\label{eq:Bn}
	\end{equation}
	
	If $\|A_0\|_F^2=O(d)$, $K=O(1)$, and $\Delta_n\|A_0\|_F=O(1)$, then
	\begin{equation}
		\mathrm{Disc}_n=O(d^2\Delta_n^2).
		\label{eq:disc_order}
	\end{equation}
	Moreover, \eqref{eq:eta_condition} gives $\eta_-\ge\eta/2$, so Assumption~\ref{ass:N} implies
	\begin{equation}
		\mathrm{Trunc}_n=O(d/T)
		\label{eq:trunc_order}
	\end{equation}
	for uniformly bounded noise and stability constants.
	
	\subsection{Explicit L\'evy-regime sample complexity}
	
	The required observation horizon is inherited from the same covariance, quadratic-variation, and truncation events used in \citet{DexheimerJeszka2026}. Let $T_{\star,0}(\delta)$ denote their base threshold determined by localized covariance concentration and mixing. Up to model-dependent constants and lower-order logarithms, their regime-specific choices can be summarized as follows.
	
	When invoking Proposition~4.11 of Dexheimer and Jeszka, we also impose its polynomial sampling-growth condition $n\le c_{\mathrm{sg}}T^{\delta_{\mathrm{sg}}}$ for fixed constants $c_{\mathrm{sg}},\delta_{\mathrm{sg}}>0$.
	
	\begin{table}[ht]
		\centering
		\caption{Orders of truncation level and observation-horizon threshold inherited from the discrete L\'evy--OU concentration theory.}
		\label{tab:regimes}
		\begin{tabular}{lll}
			\toprule
			BDLP regime & order of $\eta$ & order of $T_\star(\delta,\eta)$ \\
			\midrule
			continuous
			& $\sqrt{d\Delta_n\log T}$
			& $d^2\Delta_n\log T\log(1/\delta)\vee T_{\star,0}$ \\
			bounded jumps
			& $\sqrt d\,\log T$
			& $d^2\log T\log(1/\delta)\vee T_{\star,0}$ \\
			sub-Weibull $\alpha$
			& $\sqrt d\,(\log T)^{1+1/\alpha}$
			& $d^2(\log T)^{2+2/\alpha}\log(1/\delta)\vee T_{\star,0}$ \\
			$p$th moment
			& $T^{1/p}d^{1/2-1/p}$
			& $d^{2-2/p}T^{2/p}\log(1/\delta)\vee T_{\star,0}$ \\
			\bottomrule
		\end{tabular}
	\end{table}
	
	Table~\ref{tab:regimes} is the order-level content of Table~1 and Proposition~4.11 in \citet{DexheimerJeszka2026}. In the continuous case their mixing analysis gives $T_{\star,0}$ of order $d\log^2(d/\delta)$ under the corresponding uniform covariance assumptions. The key point for the present paper is that the addition of low-rank regularization changes the score geometry but does not alter the L\'evy-tail mechanism producing these sample-complexity thresholds.
	
	\section{Main oracle inequalities}
	\label{sec:main}
	
	We now combine the deterministic geometry with the probabilistic ingredients. Allocate a total failure probability $\delta$ as $\delta_S+\delta_C+\delta_R\le\delta$; for concreteness one may take all three equal to $\delta/3$. The score, covariance, and remainder events are then combined by a union bound.
	
	\begin{theorem}\label{thm:main_exact}
		Suppose Assumptions~\ref{ass:A0}, \ref{ass:angle}, and \ref{ass:N} hold for an exact decomposition $A_0=L_0+S_0$ with $\rank(L_0)\le r$ and $\|S_0\|_0\le s$. Suppose \eqref{eq:eta_condition} holds and $T$ exceeds the regime-specific threshold ensuring the score, covariance, and truncation events. Choose $\lambda_*$ and $\lambda_1$ as in \eqref{eq:lambdastar}--\eqref{eq:lambdaone}. Then, with probability at least $1-\delta$,
		\begin{align}
			\|\widehat A-A_0\|_F^2
			\le{}&C\{\mathrm{Disc}_n+\mathrm{Trunc}_n\}\nonumber\\
			&+C\frac{\gamma(\Delta_n)}T
			\left[r\{d+\log(1/\delta)\}+s\{\log(2d^2)+\log(1/\delta)\}\right],
			\label{eq:main_exact}
		\end{align}
		where $C$ depends only on uniform lower bounds for $\lambda_{\min}(C_\infty(B))$ and $\alpha$, upper bounds for $\lambda_{\max}(C_\infty(B))$, and fixed L\'evy/localization constants.
	\end{theorem}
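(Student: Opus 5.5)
The plan is to verify that the good event $\mathcal E(\kappa,K,B_n)$ of Definition~\ref{def:good} holds with probability at least $1-\delta$, and then invoke Theorem~\ref{thm:det_exact}. I will allocate $\delta_S=\delta_C=\delta_R=\delta/3$.

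\textbf{Step 1 (score).} Lemma~\ref{lem:dual} is applied with failure probability $\delta_S$; it requires the sample-complexity conditions ensuring $Q_T$, which are assumed. With the choices \eqref{eq:lambdastar}--\eqref{eq:lambdaone}, the bounds \eqref{eq:opscore}--\eqref{eq:infscore} give exactly $\|M_n\|_{\mathrm{op}}\le\lambda_*/2$ and $\|M_n\|_\infty\le\lambda_1/2$, which is \eqref{eq:score_event}.

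\textbf{Step 2 (curvature).} I use \eqref{eq:cov_lower}--\eqref{eq:cov_upper} with probability $1-\delta_C$. Under \eqref{eq:eta_condition}, Assumption~\ref{ass:N}, and $T\ge 2c_\eta d\Delta_n$ (absorbed into the threshold), we have $\mathbb P(\|\Delta\widetilde Z_1\|\le\eta_-)\ge1/2$. The upper bound uses that the probability is at most $1$. This yields \eqref{eq:curvature_event} with $\kappa,K$ as in \eqref{eq:kappaK}. Assumption~\ref{ass:A0}(3) makes $\kappa^{-1}$ and $K$ bounded by the stated uniform constants.

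\textbf{Step 3 (remainder).} On the remainder event of probability $1-\delta_R$, \eqref{eq:remainder_verified} gives \eqref{eq:remainder_event} with $B_n=\mathrm{Disc}_n+\mathrm{Trunc}_n$. A union bound then gives $\mathbb P(\mathcal E)\ge1-\delta$.

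\textbf{Step 4 (combine).} Theorem~\ref{thm:det_exact} with $\bar\mu$ from Assumption~\ref{ass:angle} gives $\|\widehat A-A_0\|_F^2\le \frac{56}{3\kappa}B_n+\frac{196}{9\kappa^2\alpha^2}(2r\lambda_*^2+s\lambda_1^2)$. Using $(a+b)^2\le 2a^2+2b^2$, I bound
\[\lambda_*^2\le 8C_S^2\frac{\gamma(\Delta_n)}{T}\bigl(4d+\log(12/\delta)\bigr),\qquad \lambda_1^2\le 8C_S^2\frac{\gamma(\Delta_n)}{T}\bigl(2\log(2d^2)+\log(12/\delta)\bigr).\]
For $\delta<1$, $\log(12/\delta)\le \log 12+\log(1/\delta)$, and the constant $\log 12$ is absorbed into $d$ and $\log(2d^2)$, since both are at least $1$ up to constants for $d\ge1$. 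Collecting constants then yields \eqref{eq:main_exact}, with $C$ depending on $\kappa^{-1}$, $\alpha^{-1}$, and $C_S$. Dependence on $K$ enters only if $\mathrm{Disc}_n$ is rewritten without $K$; it is already explicit in \eqref{eq:disc}.

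The main obstacle is bookkeeping rather than mathematics. The point needing care is that the three inherited events (score on $Q_T$, covariance, remainder) are stated for a common threshold $T_\star(\delta,\eta)$. I need to confirm that the threshold in the hypothesis simultaneously covers all three, including $T\ge2c_\eta d\Delta_n$. I also need to handle the fact that the $Q_T$ event itself carries a failure probability, which I would fold into $\delta_S$.
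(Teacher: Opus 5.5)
Your proposal is correct and follows the paper's proof. Like the paper, you verify $\mathcal E(\kappa,K,B_n)$ on the intersection of the score, covariance, and remainder events via Lemma~\ref{lem:dual}, \eqref{eq:kappaK}, and \eqref{eq:remainder_verified}, apply Theorem~\ref{thm:det_exact}, square \eqref{eq:lambdastar}--\eqref{eq:lambdaone}, and absorb the powers of $\kappa^{-1}$ and $\alpha^{-1}$ into $C$. Your explicit bookkeeping (the $(a+b)^2\le 2a^2+2b^2$ step and absorbing $\log 12$ into $d$ and $\log(2d^2)$) just fills in what the paper writes as $\lesssim$.
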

	
	\begin{proof}
		On the intersection of the three good events, \eqref{eq:kappaK}, Lemma~\ref{lem:dual}, and \eqref{eq:remainder_verified} verify Definition~\ref{def:good}. Apply Theorem~\ref{thm:det_exact}. Squaring \eqref{eq:lambdastar} and \eqref{eq:lambdaone} gives
		\[	r\lambda_*^2\lesssim \frac{\gamma(\Delta_n)}T r\{d+\log(1/\delta)\},\] and	\[s\lambda_1^2\lesssim \frac{\gamma(\Delta_n)}T s\{\log(2d^2)+\log(1/\delta)\}. \]
		Absorb the fixed powers of $\kappa^{-1}$ and $\alpha^{-1}$ into $C$.
	\end{proof}
	
	\begin{corollary}\label{cor:standard}
		Under the standard tail regimes in Table~\ref{tab:regimes}, with uniformly bounded model constants, $\|A_0\|_F^2=O(d)$, and $\Delta_n\|A_0\|_F=O(1)$, Theorem~\ref{thm:main_exact} gives, ignoring confidence logarithms,
		\begin{equation}
			\|\widehat A-A_0\|_F^2 \lesssim
			d^2\Delta_n^2+\frac dT	+\frac{\gamma(\Delta_n)}T\{rd+s\log d\}.
			\label{eq:standard_rate}
		\end{equation}
	\end{corollary}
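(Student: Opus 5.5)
The corollary follows from Theorem~\ref{thm:main_exact} once each term on its right-hand side is replaced by its order under the stated model assumptions. We treat the confidence level $\delta$ as fixed, so $\log(1/\delta)$ and $\log(4/\delta_S)$ are absorbed into constants. We also assume that the constant $C$ in \eqref{eq:main_exact} stays uniformly bounded. This is justified because it depends only on $\lambda_{\min}(C_\infty(B))$, $\lambda_{\max}(C_\infty(B))$, $\alpha$ and fixed L\'evy/localization constants, all of which are uniformly controlled by hypothesis. The plan is to bound the three pieces $\mathrm{Disc}_n$, $\mathrm{Trunc}_n$ and the stochastic bracket separately, and then add the results.

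\emph{Discretization term.} By \eqref{eq:kappaK}, $K=2\lambda_{\max}(C_\infty(B))=O(1)$. The assumption $\Delta_n\|A_0\|_F=O(1)$ makes the factor $(1+\Delta_n\|A_0\|_F e^{\Delta_n\|A_0\|_F})^2$ in \eqref{eq:disc} bounded. Since $\|A_0\|_F^4=O(d^2)$, we obtain $\mathrm{Disc}_n=O(d^2\Delta_n^2)$, which is exactly \eqref{eq:disc_order}.

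\emph{Truncation term.} Condition \eqref{eq:eta_condition} gives $\eta_-\ge\eta/2$, so $\mathbb P(\|\Delta\widetilde Z_1\|>\eta_-)\le\mathbb P(\|\Delta\widetilde Z_1\|\ge\eta/2)$. Assumption~\ref{ass:N} bounds the latter by $c_\eta d\Delta_n/T$. The prefactor in \eqref{eq:trunc} contains $\lambda_{\max}(C+\nu_2)e^{2\Delta_n\|A_0\|_{\mathrm{op}}}$. This is bounded because the noise constants are bounded and $\Delta_n\|A_0\|_{\mathrm{op}}\le\Delta_n\|A_0\|_F=O(1)$. Substituting, the factor $\Delta_n^{-1}$ in \eqref{eq:trunc} cancels against $\Delta_n$ from the tail bound, giving $\mathrm{Trunc}_n=O(d/T)$ as in \eqref{eq:trunc_order}.

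\emph{Stochastic term.} With $\delta$ fixed, the bracket in \eqref{eq:main_exact} is $r\{d+O(1)\}+s\{\log(2d^2)+O(1)\}$. Using $\log(2d^2)\le 3\log d$ for $d\ge2$ and absorbing the additive constants (valid since $d\ge1$, $\log d\gtrsim1$ for $d\ge2$), this is $\lesssim rd+s\log d$. The prefactor $\gamma(\Delta_n)/T$ is kept explicit; it is bounded by the same constants, but the statement retains it.

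\emph{Main obstacle.} The only delicate point is checking that the regime-specific choices of $\eta$ in Table~\ref{tab:regimes} actually satisfy both \eqref{eq:eta_condition} and Assumption~\ref{ass:N}. This is what "standard tail regimes" must mean here, and the main step would be to confirm it in each regime, for instance via a sub-Gaussian or moment tail bound on $\Delta\widetilde Z_1$ at level $\eta/2$. We also need $T\ge T_\star$ so that Theorem~\ref{thm:main_exact} applies at all. Granted these, summing the three bounds yields \eqref{eq:standard_rate}.
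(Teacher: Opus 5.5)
Your proposal is correct and follows the paper's route: you substitute $\mathrm{Disc}_n=O(d^2\Delta_n^2)$ and $\mathrm{Trunc}_n=O(d/T)$ into Theorem~\ref{thm:main_exact}, obtained exactly as in \eqref{eq:disc_order}--\eqref{eq:trunc_order} via $\eta_-\ge\eta/2$ and Assumption~\ref{ass:N}, and then absorb the confidence logarithms and use $\log(2d^2)\lesssim\log d$. The step you flag as the main obstacle is not something you need to prove, because \eqref{eq:eta_condition}, Assumption~\ref{ass:N}, and the horizon condition on $T$ are hypotheses of Theorem~\ref{thm:main_exact}, which the corollary simply inherits.
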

	
	The $rd$ term in \eqref{eq:standard_rate} has the order of the effective number of free parameters in a rank-$r$, $d\times d$ matrix. The result is an upper oracle bound for the combined L\'evy--OU model; no minimax-optimality claim for the combined low-rank-plus-sparse class is required.
	
	\begin{theorem}\label{thm:main_oracle}
		Under the probabilistic conditions of Theorem~\ref{thm:main_exact}, let $\mathfrak A$ be any family of comparators $\bar A=\bar L+\bar S$ satisfying Assumption~\ref{ass:angle}. With probability at least $1-\delta$,
		\begin{equation}
			\begin{split}
				\|\widehat A-A_0\|_F^2 \le C\inf_{\bar A\in\mathfrak A} \Bigg[&\|\bar A-A_0\|_F^2 +\mathrm{Disc}_n+\mathrm{Trunc}_n\\
				&+\frac{\gamma(\Delta_n)}T \left\{r(\bar L)d+s(\bar S)\log d\right\} \Bigg],
			\end{split}	\label{eq:approx_main}
		\end{equation}
		up to confidence logarithms and uniform structural/covariance constants.
	\end{theorem}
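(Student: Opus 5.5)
The plan is to combine the deterministic oracle bound, Theorem~\ref{thm:det_oracle}, with the probabilistic verification of Section~\ref{sec:probabilistic}, as in the proof of Theorem~\ref{thm:main_exact}, and then take an infimum over comparators.

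First, allocate $\delta=\delta_S+\delta_C+\delta_R$ and intersect three events by a union bound. The score event comes from Lemma~\ref{lem:dual}. The covariance event comes from \eqref{eq:cov_lower}--\eqref{eq:cov_upper} together with Assumption~\ref{ass:N} and \eqref{eq:eta_condition}, which give \eqref{eq:kappaK}. The remainder event is \eqref{eq:remainder_verified}. On this intersection, Definition~\ref{def:good} holds with $\kappa=\lambda_{\min}(C_\infty(B))/8$, $K=2\lambda_{\max}(C_\infty(B))$, $B_n=\mathrm{Disc}_n+\mathrm{Trunc}_n$, and $(\lambda_*,\lambda_1)$ from \eqref{eq:lambdastar}--\eqref{eq:lambdaone}.

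The key point is that none of these events refers to a comparator. The score $M_n$, the remainder $R_n$ and $\widehat C_n^{B,\eta}$ are all built around the true $A_0$. Hence the single event $\mathcal E(\kappa,K,B_n)$ simultaneously serves every $\bar A\in\mathfrak A$, and no union over the family, which may be uncountable, is needed.

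Next, fix any $\bar A=\bar L+\bar S\in\mathfrak A$ with $r=r(\bar L)$ and $s=s(\bar S)$. Since Assumption~\ref{ass:angle} holds for it with a uniform $\bar\mu$, and hence a uniform $\alpha$, Theorem~\ref{thm:det_oracle} gives \eqref{eq:oracle_approx} on the event. Then bound each factor:
\begin{itemize}
\item $1+K/\kappa\le 1+16\,\lambda_{\max}(C_\infty(B))/\lambda_{\min}(C_\infty(B))$, which Assumption~\ref{ass:A0}(3) bounds by a constant.
\item $C_2B_n/\kappa\lesssim \mathrm{Disc}_n+\mathrm{Trunc}_n$.
\item Squaring \eqref{eq:lambdastar}--\eqref{eq:lambdaone} as in the proof of Theorem~\ref{thm:main_exact} gives
\[
r\lambda_*^2+s\lambda_1^2\lesssim \frac{\gamma(\Delta_n)}{T}\bigl[r\{d+\log(1/\delta)\}+s\{\log(2d^2)+\log(1/\delta)\}\bigr].
\]
Dropping confidence logarithms and using $\log(2d^2)\lesssim\log d$ for $d\ge2$ leaves $\frac{\gamma(\Delta_n)}{T}\{rd+s\log d\}$.
\end{itemize}
Absorbing $\kappa^{-1},\kappa^{-2},\alpha^{-2}$ and $C_1,C_2,C_3$ into $C$ gives the bracketed quantity in \eqref{eq:approx_main} for this $\bar A$. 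Since the left side does not depend on $\bar A$, taking the infimum over $\mathfrak A$ finishes the argument on the same event.

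The main obstacle is a subtle one: the penalty ratio $\rho=\lambda_1/\lambda_*$ in Assumption~\ref{ass:angle} and the penalty levels themselves must be fixed before $\bar A$ is chosen. They must not depend on $r$ or $s$. This holds here, because \eqref{eq:lambdastar}--\eqref{eq:lambdaone} depend only on $d,T,\delta_S$ and $\gamma(\Delta_n)$. Likewise, the constants $\kappa,K$ must not depend on the comparator.

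A secondary check is that Theorem~\ref{thm:det_oracle}'s internal use of $\|E\|_n^2\le K\|E\|_F^2$ requires only the upper curvature bound, which is already part of the covariance event. If one wanted to avoid a separate treatment of comparators where $\|\bar A-A_0\|_F$ is huge, this is automatic, since such comparators only enlarge the right-hand side.
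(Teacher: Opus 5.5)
Your proposal is correct and follows the paper's own argument. On the single comparator-free event used for Theorem~\ref{thm:main_exact}, you apply Theorem~\ref{thm:det_oracle} to each $\bar A\in\mathfrak A$, substitute $\kappa,K$ from \eqref{eq:kappaK}, $B_n=\mathrm{Disc}_n+\mathrm{Trunc}_n$ and the squared penalty levels, and then take the infimum. You also state explicitly that neither the event nor the ratio $\rho=\lambda_1/\lambda_*$ depends on the comparator, so no union bound over $\mathfrak A$ is needed; the paper's two-line proof leaves this implicit.
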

	
	\begin{proof}
		Apply Theorem~\ref{thm:det_oracle} to each admissible comparator and substitute the verified penalty scales, covariance bounds, and remainder terms. Taking the infimum yields \eqref{eq:approx_main}.
	\end{proof}
	
	Theorem~\ref{thm:main_oracle} gives a direct bias--complexity tradeoff: singular values or entries that are too small to estimate can be left in the approximation term instead of being forced into exact rank or support constraints.
	
	\begin{corollary}\label{cor:consistency}
		Consider a sequence of models for which $\gamma(\Delta_n)$, the covariance condition numbers, and the restricted cancellation constants are uniformly bounded. If
		\begin{equation}
			d^2\Delta_n^2\to0, \qquad
			\frac{d+rd+s\log d}{T}\to0,\label{eq:consistency_conditions}
		\end{equation}
		then the exact-structure estimator satisfies $\|\widehat A-A_0\|_F\to0$ in probability. For approximate structure, the same conclusion holds whenever there are admissible comparators whose Frobenius approximation error tends to zero and whose $(r,s)$ satisfy \eqref{eq:consistency_conditions}. \end{corollary}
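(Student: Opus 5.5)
The plan is to obtain the corollary directly from Corollary~\ref{cor:standard} in the exact-structure case and from Theorem~\ref{thm:main_oracle} in the approximate case. The probability of the bad event is then made arbitrarily small by letting $\delta$ decrease slowly.

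Fix $\varepsilon>0$ and $\delta\in(0,1)$. Under the uniform boundedness hypotheses, the constant $C$ in Theorem~\ref{thm:main_exact} is uniform along the sequence, and $\kappa$, $K$ and $\alpha$ are bounded away from $0$ and $\infty$. I would first check that the sample-complexity threshold $T_\star(\delta,\eta)$ of Table~\ref{tab:regimes} is eventually exceeded. This is the main obstacle, because \eqref{eq:consistency_conditions} by itself only gives $d/T\to0$, whereas the thresholds in the table are of order $d^2$ times logarithms. I would therefore treat this condition as part of the phrase ``under the probabilistic conditions of Theorem~\ref{thm:main_exact}'', which the corollary implicitly carries along the sequence. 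Under the same reading I would also take \eqref{eq:eta_condition}, Assumption~\ref{ass:N} with $T\ge 2c_\eta d\Delta_n$, and the sampling-growth condition to hold eventually. On this reading, for all large $n$ the bound \eqref{eq:main_exact} holds with probability at least $1-\delta$.

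Next I bound each term under the scalings assumed in Corollary~\ref{cor:standard}. By \eqref{eq:disc_order}, $\mathrm{Disc}_n=O(d^2\Delta_n^2)\to0$. By \eqref{eq:trunc_order}, $\mathrm{Trunc}_n=O(d/T)\to0$. The stochastic term is at most a constant times $\gamma(\Delta_n)T^{-1}[rd+s\log(2d^2)+(r+s)\log(1/\delta)]$. With $\delta$ fixed and $\gamma$ bounded, this tends to zero because $(rd+s\log d)/T\to0$; note $\log(2d^2)\lesssim\log d$ for $d\ge2$, and $r+s\le rd+s\log d$ up to constants. Hence the right-hand side is eventually below $\varepsilon^2$, and so $\mathbb P(\|\widehat A-A_0\|_F>\varepsilon)\le\delta$ for all large $n$. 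Since $\delta$ is arbitrary, $\|\widehat A-A_0\|_F\to0$ in probability.

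For approximate structure, I choose for each $n$ an admissible comparator $\bar A_n=\bar L_n+\bar S_n$ satisfying Assumption~\ref{ass:angle} with a uniform $\bar\mu$, with $\|\bar A_n-A_0\|_F\to0$, and with $(r_n,s_n)$ satisfying \eqref{eq:consistency_conditions}. Theorem~\ref{thm:main_oracle} evaluated at $\bar A_n$, instead of the infimum, bounds $\|\widehat A-A_0\|_F^2$ by $C$ times the sum of the approximation error, $\mathrm{Disc}_n+\mathrm{Trunc}_n$, and the complexity term. Each of these tends to zero by the argument above. Alternatively I would use the explicit bound \eqref{eq:oracle_approx}, whose approximation term carries the factor $1+K/\kappa$, which is bounded. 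The same $\varepsilon$--$\delta$ argument then concludes.
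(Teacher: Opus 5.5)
Your proposal is correct and follows the route the paper implicitly intends; the paper gives no separate proof. You read the bound off Corollary~\ref{cor:standard} (respectively Theorem~\ref{thm:main_oracle} at a suitable comparator sequence), send every term to zero at fixed $\delta$, and then use that $\delta$ is arbitrary. Your caveat is also accurate: \eqref{eq:consistency_conditions} alone delivers neither the $d^2$-order thresholds of Table~\ref{tab:regimes} nor the scaling $\|A_0\|_F^2=O(d)$ needed for $\mathrm{Disc}_n=O(d^2\Delta_n^2)$, so both must be carried as standing hypotheses, as the paper tacitly does.
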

	
	\section{Computation and tuning} \label{sec:computation}
	
	The objective in \eqref{eq:estimator} is convex. Let
	\[C_n:=\widehat C_n^{B,\eta},\qquad D_n^{\mathrm{obs}}:=\frac1T\sum_{k=1}^n I_k\Delta X_kX_{k-1}^\top. \]Then \[\nabla\mathcal L_n^D(A)=D_n^{\mathrm{obs}}+AC_n.\] Starting from $L^{(0)}=S^{(0)}=0$, set $A^{(m)}=L^{(m)}+S^{(m)}$ and \[G^{(m)}=D_n^{\mathrm{obs}}+A^{(m)}C_n.\]
	For
	\begin{equation}
		0<\tau<\frac1{2\|C_n\|_{\mathrm{op}}},\label{eq:stepsize}
	\end{equation}
	the product-space proximal-gradient updates are
	\begin{align}
		L^{(m+1)} &=\operatorname{SVT}_{\tau\lambda_*} \left(L^{(m)}-\tau G^{(m)}\right),\label{eq:svt}\\
		S^{(m+1)} &=\operatorname{soft}_{\tau\lambda_1}
		\left(S^{(m)}-\tau G^{(m)}\right),\label{eq:soft}
	\end{align}
	where $\operatorname{SVT}$ thresholds singular values and $\operatorname{soft}$ is entrywise soft thresholding. The factor two in \eqref{eq:stepsize} is the Lipschitz factor for the smooth loss viewed as a function of the product variable $(L,S)$.
	
	The theory determines the relative penalty scales
	\begin{equation}
		\lambda_*\asymp\sqrt{\frac{\gamma(\Delta_n)d}{T}},
		\qquad
		\lambda_1\asymp\sqrt{\frac{\gamma(\Delta_n)\log(d^2)}{T}}.
		\label{eq:tuning_scales}
	\end{equation}
	For applications, unknown multiplicative constants can be selected by chronological validation. In the simulations, let
	\[
	\widehat\sigma
	:=\left\{
	\frac{\operatorname{median}_k\|\Delta X_k\|_2^2}
	{d\Delta_n}
	\right\}^{1/2},
	\qquad
	b_*:=\widehat\sigma\sqrt{\frac dT},
	\qquad
	b_1:=\widehat\sigma
	\sqrt{\frac{2\log(2d^2)}T}.
	\]
	We parameterize the two penalties by a common scale $c$ and a nuclear-to-sparse multiplier $q$,
	\begin{equation}
		\lambda_*=qc\,b_*,
		\qquad
		\lambda_1=c\,b_1,
		\label{eq:cv_parameterization}
	\end{equation}
	with
	\[
	c\in\{0.10,0.15,0.20,0.25,0.30,0.40\},
	\qquad
	q\in\{2,3,4,5,6\}.
	\]
	This parameterization separates overall regularization strength from the relative weighting of the two penalties. A chronological 75/25 training/validation split selects $(c,q)$. The localization radius and increment threshold are empirical $0.98$ and $0.95$ quantiles, respectively, of the corresponding Euclidean norms on the training segment; the selected multipliers are then used in the full-path refit with the same quantile rules. The localized Lasso uses the same split and filter, with multiplier grid
	$\{0.15,0.30,0.60,1.00,1.50,2.20\}$ applied to $b_1$.
	
	The proximal-gradient iterations use the step size $0.95/(2\|C_n\|_{\mathrm{op}})$ for the combined estimator and $0.95/\|C_n\|_{\mathrm{op}}$ for Lasso. Iteration stops when the relative change is below $10^{-6}$, with a cap of 1000 iterations. In the final simulation study every full-path fit satisfies this stopping criterion.
	
	\section{Simulation study}
	\label{sec:simulation}
	
	The numerical study evaluates estimation of the total drift $A_0$ under an exact low-rank-plus-sparse truth, examines how performance changes with the observation horizon at a fixed sampling mesh, and checks adaptation to a sparse-only truth. All comparisons are paired path-by-path, and each design uses 100 independent paths. The complete simulation code accompanies the manuscript.
	
	\subsection{Data-generating process}
	
	We set $d=30$ and $\Delta_n=0.05$. The mixed truth is
	\begin{equation}
		A_0=L_0+S_0,
		\qquad
		L_0=Q\operatorname{diag}(1,0.7)Q^\top,
		\label{eq:sim_truth}
	\end{equation}
	where $Q\in\mathbb R^{30\times2}$ has orthonormal columns obtained from a Gaussian matrix. The sparse component has diagonal $0.65I_d$ together with 15 symmetric off-diagonal edges. Each edge has an independent random sign and magnitude drawn uniformly from $[0.035,0.075]$. The sparse-only benchmark sets $L_0=0$ and uses an independently generated sparse matrix from the same construction. A single realized truth is fixed within each design and used for all 100 replicates. The realized drift matrices are symmetric positive definite.
	
	The BDLP is the sum of isotropic Brownian noise with diffusion standard deviation $0.7$ and an independent compound-Poisson process with intensity $0.3$ and $N(0,I_d)$ jump sizes. Paths are generated by the exact OU recursion: the deterministic transition uses $e^{-\Delta_nA_0}$, the Brownian increment uses its exact integrated covariance, and each jump is propagated from its simulated within-interval arrival time. We discard 1200 burn-in increments.
	
	For the mixed truth we use
	\[
	(T,n)\in\{(50,1000),(100,2000),(200,4000)\},
	\]
	while the sparse-only benchmark uses $(T,n)=(100,2000)$. The proposed low-rank-plus-sparse estimator and localized Lasso are fitted to the same path and use the tuning procedure in Section~\ref{sec:computation}. The principal loss is relative Frobenius error,
	\begin{equation}
		\operatorname{RelErr}(\widehat A)
		:=\frac{\|\widehat A-A_0\|_F}{\|A_0\|_F}.
		\label{eq:relative_error_sim}
	\end{equation}
	The average retained fraction after localization and increment truncation is approximately $0.931$ in every design.
	
	\subsection{Total-drift estimation}
	
	\begin{table}[ht]
		\centering
		\caption{Total-drift estimation over 100 paired Monte Carlo paths. Lasso and low-rank-plus-sparse entries are mean relative Frobenius error (standard deviation). The paired difference is low-rank-plus-sparse minus Lasso, and the win rate is the percentage of paths on which this difference is negative.}
		\label{tab:main_simulation}
		\small
		\begin{tabular}{llcccc}
			\toprule
			Truth & $T$ & Lasso & LR+$S$ & Paired difference & Win rate \\
			\midrule
			Mixed & 50
			& 0.502 (0.026) & \textbf{0.474 (0.026)} & $-0.0282$ & 89\% \\
			Mixed & 100
			& 0.423 (0.017) & \textbf{0.404 (0.018)} & $-0.0187$ & 92\% \\
			Mixed & 200
			& 0.373 (0.011) & \textbf{0.351 (0.017)} & $-0.0222$ & 89\% \\
			Sparse only & 100
			& 0.310 (0.022) & \textbf{0.299 (0.018)} & $-0.0113$ & 75\% \\
			\bottomrule
		\end{tabular}
	\end{table}
	
	Table~\ref{tab:main_simulation} shows a stable aggregate-estimation advantage for the combined penalty. Relative to Lasso, mean relative Frobenius error is lower by approximately $5.6\%$, $4.4\%$, and $5.9\%$ at $T=50,100,200$ under the mixed truth. The corresponding paired win rates are $89\%$, $92\%$, and $89\%$. Approximate 95\% normal-approximation Monte Carlo intervals for the mean paired differences are $[-0.0319,-0.0244]$, $[-0.0208,-0.0166]$, and $[-0.0250,-0.0194]$, respectively. Thus the improvement is systematic across paths rather than being driven by a small number of favorable realizations.
	
	The sparse-only benchmark gives mean errors $0.310$ for Lasso and $0.299$ for the combined estimator, a reduction of approximately $3.6\%$, with a $75\%$ paired win rate and a corresponding approximate 95\% normal-approximation interval $[-0.0146,-0.0080]$ for the mean paired difference. In this sparse-only benchmark, the additional nuclear penalty does not degrade aggregate estimation on average and yields a modest improvement. Since $\Delta_n$ is fixed throughout, the sequence $T=50,100,200$ is interpreted as a horizon diagnostic rather than an estimate of a high-frequency convergence exponent. For both methods the mean error decreases monotonically as the observed horizon grows.
	
	\subsection{Component and rank diagnostics}
	
	The oracle inequalities in Sections~\ref{sec:deterministic}--\ref{sec:main} control the total drift $\widehat A-A_0$. To distinguish this target from exact decomposition recovery, we also record errors for $\widehat L$ and $\widehat S$, numerical-rank summaries, and the distance between the leading low-rank subspaces. For the mixed truth the true low-rank component has rank two. Let $\widehat L_{(2)}$ denote the best rank-two truncation of $\widehat L$, and define the normalized projector error
	\[
	\frac{\|\widehat P_2-P_2\|_F}{\sqrt{4}},
	\]
	where $P_2$ and $\widehat P_2$ project onto the true and estimated leading two-dimensional left singular subspaces.
	
	\begin{table}[ht]
		\centering
		\caption{Component diagnostics for the low-rank-plus-sparse estimator over 100 paths. Error columns report means; numerical-rank columns report medians. Rank$_{10^{-3}}$ counts singular values of $\widehat L$ exceeding $10^{-3}$, and Rank$_{95\%}$ is the number of singular values required to explain 95\% of $\|\widehat L\|_F^2$.}
		\label{tab:component_diagnostics}
		\small
		\begin{tabular}{llcccccc}
			\toprule
			Truth & $T$ & $\operatorname{rank}(L_0)$ & Rank$_{10^{-3}}$ & Rank$_{95\%}$
			& Rel. $L$ err. & Rank-2 $L$ err. & Rel. $S$ err. \\
			\midrule
			Mixed & 50 & 2 & 2 & 2 & 1.032 & 1.024 & 0.369 \\
			Mixed & 100 & 2 & 2 & 2 & 1.005 & 1.004 & 0.282 \\
			Mixed & 200 & 2 & 5 & 3 & 0.956 & 0.955 & 0.217 \\
			Sparse only & 100 & 0 & 1 & 1 & -- & -- & 0.292 \\
			\bottomrule
		\end{tabular}
	\end{table}
	
	The component diagnostics sharpen the interpretation of Table~\ref{tab:main_simulation}. Sparse-component recovery improves steadily with the horizon: the mean relative error of $\widehat S$ decreases from $0.369$ at $T=50$ to $0.282$ at $T=100$ and $0.217$ at $T=200$. Low-rank identification is substantially more difficult. The mean relative error of $\widehat L$ is $1.032$, $1.005$, and $0.956$, and truncating $\widehat L$ to the true rank two changes these values only to $1.024$, $1.004$, and $0.955$. The mean normalized subspace-projector errors are $0.963$, $0.954$, and $0.859$. Thus the leading estimated low-rank directions do not recover the true subspace accurately even when the total drift is estimated more accurately than by Lasso.
	
	Numerical rank displays the same distinction. At $T=50$ and $T=100$, the median $10^{-3}$-threshold rank and the median 95\%-energy rank both equal the true rank two, but at $T=200$ they increase to five and three. In the sparse-only design, where the true low-rank component has rank zero, both corresponding medians equal one. These results show that the empirical advantage of the combined penalty is an advantage in estimating $A_0=L_0+S_0$; it should not be interpreted as exact recovery of the decomposition without additional singular-value, minimum-signal, and separation conditions.
	
	\subsection{Tuning and numerical stability}
	
	The scale--ratio parameterization in \eqref{eq:cv_parameterization} produces a well-dispersed validation profile in the final experiment. The common scale $c=0.25$ is selected in $46\%$, $58\%$, and $56\%$ of mixed-truth paths at $T=50,100,200$, and in $67\%$ of sparse-only paths. Neither boundary value $c=0.10$ nor $c=0.40$ is selected in the final study. The multiplier ratio is concentrated primarily on $q=3$ and $q=4$, which together account for $70\%$, $64\%$, $75\%$, and $61\%$ of selections in the four designs. Selection of the upper ratio boundary $q=6$ remains limited to $15\%$, $15\%$, $11\%$, and $18\%$, respectively. The validation grid therefore contains the empirically preferred region in its interior with respect to overall regularization strength and with only modest use of the ratio boundary.
	
	Every final full-path optimization converges under the tolerance $10^{-6}$. For the combined estimator, the mean iteration counts are $256.4$, $159.7$, $140.6$, and $147.7$ across the four designs, with a maximum of 501 iterations; for Lasso the maximum is 221. The numerical results are therefore obtained from converged final fits rather than iteration-capped solutions.
	
	\section{Discussion}
	
	The low-rank-plus-sparse formulation extends high-dimensional L\'evy--OU drift estimation to dependence structures that combine dense common modes with localized direct interactions. The proof separates the problem into stochastic-process concentration and structured convex geometry. On the stochastic side, the discrete L\'evy--OU machinery supplies increment-truncated covariance concentration, a Gaussian-width martingale inequality, deterministic discretization control, and regime-specific tail filtering. On the structural side, nuclear and $\ell_1$ penalties produce one weighted joint cone, and restricted non-cancellation on that cone converts component complexity into total-drift Frobenius risk.
	
	Three features sharpen the resulting theory. First, the operator-norm score has Gaussian width of order $\sqrt d$, yielding the natural $rd/T$ low-rank contribution. Second, Theorem~\ref{thm:main_oracle} permits approximate rather than exact low-rank-plus-sparse structure. Third, the covariance and sample-complexity assumptions are explicit and use the same increment-truncated empirical matrix that appears in the loss.
	
	The final Monte Carlo study aligns directly with the total-drift target of the oracle inequalities. Under the mixed rank-two-plus-sparse truth, the combined estimator lowers mean relative Frobenius error by $4.4$--$5.9\%$ relative to localized Lasso across horizons $T=50,100,200$ and wins $89$--$92\%$ of paired paths. The improvement persists in the sparse-only benchmark, where the combined estimator wins $75\%$ of paths. Chronological validation selects an interior overall regularization scale, and every final full-path optimization satisfies the prescribed convergence tolerance.
	
	The component diagnostics also identify the statistical task on which the combined regularizer is most effective. Recovery of the sparse component improves strongly with the observation horizon, whereas the low-rank component, its numerical rank, and its leading singular subspace remain much less accurately identified. In particular, imposing the true rank two after estimation produces little reduction in low-rank Frobenius error. The observed gain is therefore a gain in estimation of the aggregate drift $A_0$, not evidence of exact latent decomposition. This distinction is consistent with Theorems~\ref{thm:main_exact} and \ref{thm:main_oracle}, whose loss target is $\|\widehat A-A_0\|_F$.
	
	Further extensions include minimax lower bounds for the combined L\'evy--OU model, component-recovery theory under explicit singular-gap and minimum-signal conditions, fully data-driven theory for $B$, $\eta$, and the penalty constants, and inferential results for selected entries or low-dimensional functionals of the drift.
	
	\section*{Conflict of Interest}
	The author declares that there is no conflict of interest.


\begin{thebibliography}{99}
		\bibitem[Agarwal et al.(2012)]{AgarwalNegahbanWainwright2012}
		A.~Agarwal, S.~Negahban, and M.~J. Wainwright,
		\newblock Noisy matrix decomposition via convex relaxation: Optimal rates in high dimensions,
		\newblock \emph{Annals of Statistics} \textbf{40} (2012), no.~2, 1171--1197.
		
		\bibitem[Chandrasekaran et al.(2011)]{ChandrasekaranSanghaviParriloWillsky2011}
		V.~Chandrasekaran, S.~Sanghavi, P.~A. Parrilo, and A.~S. Willsky,
		\newblock Rank-sparsity incoherence for matrix decomposition,
		\newblock \emph{SIAM Journal on Optimization} \textbf{21} (2011), no.~2, 572--596.
		
		\bibitem[Cio{\l}ek et al.(2020)]{CiolekMarushkevychPodolskij2020}
		G.~Cio{\l}ek, D.~Marushkevych, and M.~Podolskij,
		\newblock On Dantzig and Lasso estimators of the drift in a high dimensional Ornstein--Uhlenbeck model,
		\newblock \emph{Electronic Journal of Statistics} \textbf{14} (2020), no.~2, 4395--4420.
		
		\bibitem[Dexheimer and Jeszka(2026)]{DexheimerJeszka2026}
		N.~Dexheimer and N.~Jeszka,
		\newblock Sparse estimation for high-dimensional L\'evy-driven Ornstein--Uhlenbeck processes from discrete observations,
		\newblock arXiv:2603.06176, 2026.
		
		\bibitem[Dexheimer and Strauch(2024)]{DexheimerStrauch2024}
		N.~Dexheimer and C.~Strauch,
		\newblock On Lasso and Slope drift estimators for L\'evy-driven Ornstein--Uhlenbeck processes,
		\newblock \emph{Bernoulli} \textbf{30} (2024), no.~1, 88--116.
		
		\bibitem[Ga\"iffas and Matulewicz(2019)]{GaiffasMatulewicz2019}
		S.~Ga\"iffas and G.~Matulewicz,
		\newblock Sparse inference of the drift of a high-dimensional Ornstein--Uhlenbeck process,
		\newblock \emph{Journal of Multivariate Analysis} \textbf{169} (2019), 1--20.
		
		\bibitem[Masuda(2004)]{Masuda2004}
		H.~Masuda,
		\newblock On multidimensional Ornstein--Uhlenbeck processes driven by a general L\'evy process,
		\newblock \emph{Bernoulli} \textbf{10} (2004), no.~1, 97--120.
		
		\bibitem[Negahban et al.(2012)]{NegahbanRavikumarWainwrightYu2012}
		S.~Negahban, P.~Ravikumar, M.~J. Wainwright, and B.~Yu,
		\newblock A unified framework for high-dimensional analysis of $M$-estimators with decomposable regularizers,
		\newblock \emph{Statistical Science} \textbf{27} (2012), no.~4, 538--557.
		
		\bibitem[Sato and Yamazato(1984)]{SatoYamazato1984}
		K.~Sato and M.~Yamazato,
		\newblock Operator-self-decomposable distributions as limit distributions of processes of Ornstein--Uhlenbeck type,
		\newblock \emph{Stochastic Processes and their Applications} \textbf{17} (1984), 73--100.
		
	\end{thebibliography}
\end{document}